\documentclass[letterpaper, 10 pt, conference]{ieeeconf}  %\renewcommand{\baselinestretch}{0.98}

\IEEEoverridecommandlockouts                             
\usepackage{float}
\usepackage{verbatim}
\usepackage{paralist}
\usepackage{amsfonts}
\usepackage{graphics} % for pdf, bitmapped graphics files
\usepackage{epsfig} % for postscript graphics files
\usepackage{mathptmx} % assumes new font selection scheme installed
\usepackage{times} % assumes new font selection scheme installed
\usepackage{amsmath} % assumes amsmath package installed
\usepackage{amssymb}  % assumes amsmath package installed
\usepackage{physics}
\usepackage{mdwmath}
\usepackage{mdwtab}
\usepackage{color}
\usepackage[hidelinks]{hyperref}
\usepackage{algorithm}
\usepackage{algorithmic}

\usepackage{caption}
\usepackage{subcaption}
\usepackage{amsmath} % Make sure this is included
\usepackage{bm}      % Package for bold math symbols

\usepackage{tikz}
\usetikzlibrary{shapes, arrows.meta, positioning}

\newtheorem{proposition}{Proposition}% 

\newtheorem{remark}{Remark}

\def\BibTeX{{\rm B\kern-.05em{\sc i\kern-.025em b}\kern-.08em
    T\kern-.1667em\lower.7ex\hbox{E}\kern-.125emX}}

\begin{document}

\title{Tensor Network Methods for Advection–Diffusion–Reaction Systems Using Quantum-Inspired Representations}

\author{Nahid Binandeh Dehaghani, Rafal Wisniewski, A. Pedro Aguiar
\thanks{Nahid Binandeh Dehaghani is with the Department of Electronic Systems, Aalborg University, Aalborg, Denmark.
        {\tt\small nahidbd@es.aau.dk}}%
\thanks{Rafal Wisniewski is with the Department of Electronic Systems, Aalborg University, Aalborg, Denmark. {\tt\small raf@es.aau.dk}}%} 
\thanks{A. Pedro Aguiar is with SYSTEC-ARISE, Faculty of Engineering, University of Porto, Porto, Portugal. {\tt\small 	pedro.aguiar@fe.up.pt}}
%} 
\thanks{The authors acknowledge the support of the Danish e-Infrastructure Consortium (DeiC) and the National Quantum Algorithm Academy (NQAA) through the Postdoctoral Scholarship 
under the project ``Quantum-Driven Solutions for Multi-Agent Systems and Advanced Computation''. This work was also partially supported by the Research Center for Systems and Technologies (SYSTEC, DOI 10.54499/UID/00147/2025) - and the Associate Laboratory Advanced Production and Intelligent Systems (ARISE, DOI 10.54499/LA/P/0112/2020), both funded by Fundação para a Ciência e a Tecnologia, I.P./ MCTES through the national funds.}
}

\maketitle

\begin{abstract}
We present a quantum-inspired tensor-network framework for solving
advection--diffusion--reaction (ADR) partial differential equations.
Discretized solution fields are encoded as matrix product states (MPS),
while differential operators are represented as matrix product operators
(MPOs). Time integration is performed entirely in tensor-network form using
explicit Euler updates with controlled truncation.
The method is evaluated on one- and two-dimensional ADR problems and
compared with high-accuracy Runge--Kutta reference solutions. Numerical
results show that the proposed representation remains compact, stable, and
accurate across a range of dynamical regimes. The solver captures both
local solution profiles and global observables while maintaining small bond
dimensions throughout the simulation. These results highlight the potential
of tensor networks as efficient structure-preserving tools for PDE
simulation in multiple spatial dimensions.
\end{abstract}

\section{Introduction}

Advection--diffusion--reaction (ADR) equations arise in a wide range of
physical, chemical, and biological systems, including pollutant transport,
heat transfer, reactive flows, and ecological population dynamics
\cite{hundsdorfer2003numerical}. Despite their relatively simple mathematical
form, ADR systems frequently exhibit a combination of sharp advective fronts,
diffusion-driven smoothing, and stiff reaction dynamics. These multiscale
behaviors pose considerable difficulties for classical numerical solvers,
particularly when high spatial resolution or higher spatial dimensions are
required.

Tensor-network representations offer a principled mechanism to mitigate
the curse of dimensionality associated with high-resolution discretizations,
especially when the %underlying 
solution admits a low-rank tensor structure.
Originally developed in quantum many-body physics for the efficient
representation of high-dimensional wavefunctions
\cite{TN_Review,Schollwock2011,Orus2014}, tensor networks, and matrix product
states (MPS) in particular, have emerged as powerful tools in numerical
scientific computing. Their ability to exploit low-rank separability and
localized correlation structure enables compact representations of
high-dimensional state vectors and structured operators
\cite{khoromskij2011d}. Matrix product operators (MPOs) extend this
framework to linear operators arising from discretized PDEs.

Recent work has demonstrated that quantum-inspired tensor networks (QTNs)
can approximate the flow maps of hydrodynamic PDEs with strong compression
and controlled error growth \cite{Dehaghani2026}. These results suggest that
many discretized PDE solution manifolds possess effective low-rank structure
after tensorization, allowing the underlying dynamics to be simulated with
computational cost governed primarily by the bond dimensions and tensor-network
structure rather than by the full dense grid size.

Building on these developments, we develop a tensor-network time-stepping framework for ADR equations. Discretized solution fields are encoded as MPS, while finite-difference derivative operators are represented as MPOs. Time integration is performed entirely in tensor-network form using explicit Euler updates with canonicalization and SVD-based truncation, enabling compact and efficient simulation of high-resolution discretizations.

\paragraph*{Contributions}
This work introduces a tensor-network framework for solving
ADR equations using MPS/MPOs. We construct compact MPO representations of
finite-difference advection and diffusion operators and perform explicit
Euler time stepping entirely within the tensor-network format. Numerical
experiments in one and two spatial dimensions demonstrate that ADR dynamics
admit intrinsically low-rank MPS representations with bounded bond-dimension
growth. The method shows excellent agreement with high-accuracy RK45
reference solutions and remains robust under synthetic stress tests that
isolate transport, diffusion, reaction, and oscillatory initial conditions.
The approach extends naturally to two spatial dimensions while preserving
stability and accuracy and maintaining compact tensor-network
representations.

\section{Background}
\label{sec:background}

This section introduces the tensor-network notation used throughout the paper, with a focus on MPS and MPOs. Originally developed in quantum many-body physics, these representations are well suited to structured discretizations of partial differential equations. We adopt a quantum-inspired Dirac notation to express tensorized PDE solution fields in a form analogous to states in a composite tensor-product space.

\paragraph*{Tensorization and Quantum-Inspired Representation}
Consider a discretized PDE solution $u \in \mathbb{R}^{N}$ sampled on a uniform spatial grid. When $N = d^L$, we reshape $u$ into an order-$L$ tensor and interpret its entries as amplitudes of a quantum-inspired state
\begin{equation}
|\Psi\rangle
= \sum_{s_1,\dots,s_L}
\psi_{s_1\ldots s_L}
\, |s_1\rangle \otimes \cdots \otimes |s_L\rangle,
\qquad
\psi_{s_1\ldots s_L} \in \mathbb{R}.
\end{equation}
Here, each local basis vector $|s_i\rangle$ corresponds to a digit in a multi-index encoding of the spatial grid. In contrast to a true quantum system, the indices $s_i$ represent numerical degrees of freedom induced by the tensorization map rather than physical particles. The state $|\Psi\rangle$ resides in the tensor-product space
$\mathcal{H} = \bigotimes_{i=1}^{L} \mathbb{R}^{d}$, which is formally analogous to the state-space structure of quantum lattice systems. Although the amplitudes $\psi_{s_1\ldots s_L}$ contain exactly the same information as the original vector $u$, this representation exposes separability and correlation structures that can be efficiently exploited by tensor-network factorizations.

\paragraph*{Matrix Product States}
Matrix product states provide compact, structured representations of high-dimensional tensors of the form above \cite{Schollwock2011,Oseledets2011}. An MPS expresses the amplitudes as
$
\psi_{s_1\ldots s_L}
=
A_1^{(s_1)} A_2^{(s_2)} \cdots A_L^{(s_L)},
$
where each $A_i^{(s_i)} \in \mathbb{R}^{D_{i-1}\times D_i}$ is a site-local core, and $(D_0, D_1, \dots, D_L)$ are the bond dimensions with $D_0 = D_L = 1$. The bond dimensions $D_i$ quantify the amount of correlation (entanglement, in quantum terminology) across bond $i$. When these remain small, the representation requires only $O(L d \chi^2)$ parameters, where $\chi = \max_i D_i$, instead of the full $d^L$ degrees of freedom.

\paragraph*{Canonical Forms}
As in quantum many-body methods, numerical stability is improved by imposing orthonormality conditions on the virtual indices. In a left-canonical representation, the cores satisfy
$
\sum_{s_i} \left(A_i^{(s_i)}\right)^T A_i^{(s_i)} = I_{D_i},
$
ensuring that the left block of the network forms an orthonormal basis. Right-canonical and mixed-canonical forms are defined analogously. In the mixed-canonical form, a center tensor is introduced whose singular values correspond to the Schmidt coefficients across a chosen bipartition. These coefficients play a key role in stable truncation during time evolution.

\paragraph*{Matrix Product Operators}
Many operators arising from discretized PDEs exhibit locality and tensor-product structure similar to those of lattice Hamiltonians. An MPO represents a linear operator $\mathcal{O} : \mathbb{R}^{d^L} \to \mathbb{R}^{d^L}$ as
\begin{equation}
\mathcal{O}
=
\sum_{\{s_i\},\{s'_i\}}
O_{s_1 \ldots s_L}^{\;\; s'_1 \ldots s'_L}
\, |s_1\rangle\!\langle s'_1|
\otimes \cdots \otimes
|s_L\rangle\!\langle s'_L|,
\end{equation}
with the coefficient tensor factorized across sites as
\begin{equation*}
O_{s_1 \ldots s_L}^{\;\; s'_1 \ldots s'_L}
=
\sum_{\alpha_1,\dots,\alpha_{L-1}}
W_1^{(s_1,s'_1)}{}_{1,\alpha_1}
W_2^{(s_2,s'_2)}{}_{\alpha_1,\alpha_2}
\cdots
W_L^{(s_L,s'_L)}{}_{\alpha_{L-1},1},
\end{equation*}
where the $W_i$ are MPO cores and the indices $\alpha_i$ carry the operator
bond dimension. In the ADR setting, standard finite-difference advection and
diffusion stencils couple only neighboring grid points, leading to MPO bond
dimensions that remain small and independent of grid size.

\paragraph*{Applying an MPO to an MPS}
Given an MPS with cores $A_i^{(s_i)}$, the application
$|\widetilde{\Psi}\rangle = \mathcal{O}|\Psi\rangle$ yields a new MPS with cores
$
\widetilde{A}_i^{(s_i)}
=
\sum_{s'_i}
W_i^{(s_i,s'_i)} \otimes A_i^{(s'_i)},
$
where $\otimes$ denotes the Kronecker product combining the virtual bond spaces of the MPO and MPS cores. As a result, the bond dimensions increase multiplicatively with the MPO bond dimensions and are subsequently reduced by canonicalization and SVD-based truncation. This mechanism underlies each step of the tensor-network time integrator.

\begin{remark}[Relevance to ADR Time Stepping]
In the proposed framework, advection and diffusion operators are represented as MPOs acting on an MPS encoding of the solution field, while the reaction term is a diagonal on-site operator. Each explicit Euler step therefore reduces to a sequence of MPO--MPS contractions followed by canonicalization and controlled bond-dimension truncation.
\end{remark}

\section{Tensor-Network Framework for ADR Equations}
\label{sec:QTN}

We now specialize the tensor-network formalism to the numerical solution of ADR equations. Let $u(x,t)$ denote the solution on a spatial domain $\Omega \subset \mathbb{R}^{p}$, where the advection term models transport, the diffusion term accounts for dissipative spreading, and the reaction term describes local growth or decay. In the form considered here, the ADR equation is given by
\begin{equation}
u_t + c \cdot \nabla u = \nu \Delta u + \lambda u,
\end{equation}
where $c = (c_1,\dots,c_p)$ is a constant advection velocity vector, $\nu > 0$ is the diffusion coefficient, and $\lambda \in \mathbb{R}$ is the reaction rate, with $\lambda>0$ corresponding to growth and $\lambda<0$ to decay.
After spatial discretization, the central idea is to represent the solution field in MPS form and the differential operators in MPO form, so that time stepping can be carried out entirely within the tensor-network representation.

Let the discretized solution at time $t_k$ be written in the quantum-inspired form introduced in Section~\ref{sec:background},
$
|\Psi_k\rangle
=
\sum_{s_1,\ldots,s_L}
\psi^{(k)}_{s_1\ldots s_L}
\, |s_1\rangle \otimes \cdots \otimes |s_L\rangle,
$
where $(s_1,\ldots,s_L)$ is a base-$d$ encoding of the spatial grid index.
Following Section~\ref{sec:background}, we represent $|\Psi_k\rangle$ in MPS form, with controlled truncation applied during time stepping:
$
\psi^{(k)}_{s_1\ldots s_L}
\approx
A_1^{(s_1)} A_2^{(s_2)} \cdots A_L^{(s_L)},
$
where $A_i^{(s_i)} \in \mathbb{R}^{D_{i-1} \times D_i}$ and $D_0 = D_L = 1$. Canonical forms are used throughout to ensure numerical stability and enable efficient bond-dimension truncation during time stepping.

This construction extends naturally to multidimensional grids. For a $2^{L_x} \times 2^{L_y}$ grid, we map the two-dimensional index $(\ell_x,\ell_y)$ to a one-dimensional index using a row-major ordering,
$
n = \ell_x + 2^{L_x}\,\ell_y,
$
with $\ell_x \in \{0,\dots,2^{L_x}-1\}$ and $\ell_y \in \{0,\dots,2^{L_y}-1\}$. The resulting vector is then tensorized and represented as an MPS of length $L = L_x + L_y$. This ordering is used consistently in both the state encoding and the construction of the finite-difference MPOs. Such a representation retains the locality structure of the underlying finite-difference operators, which enables their encoding as MPOs with small, grid-independent bond dimensions \cite{KazeevKhoromskij2012}.

The ADR operator consists of three components: advection, diffusion, and reaction. At the discrete level, first-derivative operators such as $u_x$ and $u_y$, as well as second-derivative operators such as $u_{xx}$ and $u_{yy}$, have local finite-difference stencil structure and can therefore be encoded as MPOs. We denote these by $\mathcal{D}_x$, $\mathcal{D}_y$, $\mathcal{D}_{xx}$, and $\mathcal{D}_{yy}$ in two spatial dimensions, with analogous notation in higher dimensions. In $p$ spatial dimensions, we write the corresponding first- and second-derivative MPOs generically as $\mathcal{D}_j$ and $\mathcal{D}_{jj}$, respectively. For instance,
$
\mathcal{D}_x |\Psi_k\rangle
=
\sum_{s_1,\ldots,s_L}
\left(
\sum_{s'_1,\ldots,s'_L}
D_{x,\,s_1\ldots s_L}^{\;\;s'_1\ldots s'_L}
\psi^{(k)}_{s'_1\ldots s'_L}
\right)
|s_1 \ldots s_L\rangle,
$
where the coefficient tensor $D_x$ admits an MPO factorization across sites. The reaction term acts pointwise on the grid and is therefore represented by the diagonal operator
$
\mathcal{R}=\lambda I.
$

Let $\Delta t$ denote the time step. In tensor-network form, a single explicit Euler update is given by
$
|\Psi_{k+1}\rangle
=
|\Psi_k\rangle
+
\Delta t
\left[
- \sum_{j=1}^{p} c_j \mathcal{D}_j |\Psi_k\rangle
+
\nu \sum_{j=1}^{p} \mathcal{D}_{jj} |\Psi_k\rangle
+
\lambda |\Psi_k\rangle
\right].
$
Thus, each time step consists of a sequence of MPO--MPS contractions corresponding to transport, diffusion, and reaction. These operations generally increase the intermediate bond dimensions, so after each update the state is brought into mixed-canonical form and compressed via singular-value truncation.

We denote the projection onto the admissible MPS manifold with tolerance $\varepsilon$ and maximum bond dimension $\chi_{\max}$ by $\Pi_{\chi_{\max},\varepsilon}(\cdot)$. The resulting practical update rule is
$
|\Psi_{k+1}\rangle
=
\Pi_{\chi_{\max},\varepsilon}
\left(
|\Psi_k\rangle
+
\Delta t\, \mathcal{L}_{\mathrm{ADR}} |\Psi_k\rangle
\right),
$
where $\mathcal{L}_{\mathrm{ADR}}$ denotes the combined MPO associated with the full ADR operator. This update is conceptually similar to tensor-network time-evolution methods for lattice systems, in which operator applications are interleaved with controlled truncation \cite{Vidal2004}. For visualization and error evaluation, the physical grid solution is reconstructed from the tensor-network state via
$
u(x_i,t_{k+1})
=
\big(\mathrm{decode}(|\Psi_{k+1}\rangle)\big)[i],
$
while all other operations are performed directly in the tensor-network representation.

\paragraph*{Example: finite-difference MPO}
Consider the one-dimensional second-order centered finite-difference approximation of the Laplacian,
$
(\mathcal{D}_{xx} u)_i
=
\frac{1}{\Delta x^2}\left(u_{i-1} - 2u_i + u_{i+1}\right),
$
which corresponds to a tridiagonal operator with stencil $[1,-2,1]$. Due to its nearest-neighbor coupling structure, this operator admits a compact MPO representation with small constant bond dimension (e.g., 3). The MPO cores propagate identity and shift contributions along the tensor network, while boundary conditions are incorporated through appropriate modifications of the edge cores. Explicit constructions of such finite-difference MPOs are available in \cite{KazeevKhoromskij2012}.

\section{Methodology}
\label{sec:method}

This section describes the computational procedure used to advance the ADR equation directly in matrix product state/operator (MPS/MPO) form. All computations are carried out in the tensorized representation introduced in Sections~\ref{sec:background}--\ref{sec:QTN}.

At each time step, the ADR update is computed through a sequence of MPO--MPS contractions followed by canonicalization and controlled truncation. Starting from the encoded state $|\Psi_k\rangle$, we form the intermediate update
\[
|\widetilde{\Psi}_{k+1}\rangle
=
|\Psi_k\rangle
+
\Delta t\,\mathcal{L}_{\mathrm{ADR}}\,|\Psi_k\rangle,
\]
where $\mathcal{L}_{\mathrm{ADR}}$ denotes the combined MPO associated with the discretized advection, diffusion, and reaction terms. This operation constitutes the main algebraic step of the method.

The choice of explicit Euler is motivated by its simplicity and direct compatibility with tensor-network representations, where each time step reduces to a sequence of MPO--MPS contractions. Although this requires a relatively large number of time steps, the per-step computational cost remains low due to the small bond dimensions observed in practice. In contrast, higher-order methods such as RK45 involve multiple operator evaluations per step and are less straightforward to implement efficiently in tensor-network form; the development of higher-order tensor-network integrators is therefore left for future work.

Since applying an MPO to an MPS enlarges the virtual bond dimensions, the intermediate state is subsequently brought into mixed-canonical form and compressed using singular-value decompositions (SVDs).
More precisely, at each bond we discard singular values smaller than a prescribed tolerance $\varepsilon_{\mathrm{SVD}}$ and cap the retained bond dimension by $\chi_{\max}$. We denote the resulting projection onto the admissible MPS manifold by
$
\Pi_{\chi_{\max},\varepsilon_{\mathrm{SVD}}}(\cdot).
$
The practical time-stepping rule is therefore
\[
|\Psi_{k+1}\rangle
=
\Pi_{\chi_{\max},\varepsilon_{\mathrm{SVD}}}
\!\left(
|\Psi_k\rangle
+
\Delta t\,\mathcal{L}_{\mathrm{ADR}}\,|\Psi_k\rangle
\right).
\]
This projection step controls representation complexity and mitigates bond-dimension growth during repeated operator application.
Starting from an initial encoded state $|\Psi_0\rangle$, the above update is applied sequentially to generate the full rollout in tensor-network form. Physical-space fields are reconstructed only when needed for visualization or comparison with reference solvers
$
u(x_i,t_k)=\mathrm{decode}\!\left(|\Psi_k\rangle\right)[i].
$
All other operations, including state evolution and derivative application, are performed entirely in MPS/MPO form. In the numerical experiments, the MPOs are constructed directly from the finite-difference discretization of the ADR operator, and the same update structure is used at every time step.
The following proposition provides a qualitative bound on the accumulation of truncation errors under repeated projection in the tensor-network time-stepping scheme.

\begin{proposition}[Propagation of truncation errors]
Let
$
F(\Psi):=\Psi+\Delta t\,\mathcal{L}_{\mathrm{ADR}}\Psi
$
denote the one-step explicit Euler map, and consider the projected scheme
$
\Psi_{k+1}
=
\Pi_{\chi_{\max}, \varepsilon_{\mathrm{SVD}}}
\big(F(\Psi_k)\big).
$
Let the corresponding untruncated trajectory be defined by
$
\widehat{\Psi}_{k+1}=F(\widehat{\Psi}_k)$,
$\widehat{\Psi}_0=\Psi_0$.
Assume that for all intermediate states $v$,
$
\|\Pi_{\chi_{\max},\varepsilon_{\mathrm{SVD}}}(v)-v\|
\le \varepsilon_{\mathrm{SVD}},
$
and that $F$ is Lipschitz continuous with constant $q \ge 0$, i.e.,
$
\|F(u)-F(v)\|\le q\,\|u-v\| \quad \forall\,u,v$.
Then the global truncation error $e_k := \Psi_k - \widehat{\Psi}_k$ satisfies
$
\|e_{k+1}\|\le q\,\|e_k\|+\varepsilon_{\mathrm{SVD}}.
$
In particular, if $e_0 = 0$, then
$
\|e_k\|
\le
\begin{cases}
k\,\varepsilon_{\mathrm{SVD}}, & q = 1, \\[1ex]
\dfrac{1 - q^k}{1 - q}\,\varepsilon_{\mathrm{SVD}}, & 0 \le q < 1
\end{cases}.
$
Moreover, if the ADR dynamics are contractive and the explicit Euler map satisfies
$
\|F(u) - F(v)\| \le (1 - \gamma \Delta t)\, \|u - v\|
\quad \text{for some } \gamma > 0,
$
then
$
\|e_k\|
\le
\dfrac{1 - (1 - \gamma \Delta t)^k}{\gamma \Delta t}\,
\varepsilon_{\mathrm{SVD}}
\le
\dfrac{\varepsilon_{\mathrm{SVD}}}{\gamma \Delta t}
$.
Thus, in dissipative ADR regimes, truncation errors remain uniformly bounded in time.
\end{proposition}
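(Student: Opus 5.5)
The plan is a one-step recursion followed by a discrete Grönwall-type unrolling. Write
\[
e_{k+1}=\Psi_{k+1}-\widehat{\Psi}_{k+1}
=\bigl[\Pi_{\chi_{\max},\varepsilon_{\mathrm{SVD}}}(F(\Psi_k))-F(\Psi_k)\bigr]+\bigl[F(\Psi_k)-F(\widehat{\Psi}_k)\bigr].
\]
Apply the triangle inequality. For the first bracket, use the assumed projection bound with intermediate state $v=F(\Psi_k)$; it is at most $\varepsilon_{\mathrm{SVD}}$. For the second bracket, use the Lipschitz hypothesis; it is at most $q\|e_k\|$. This gives the recursion $\|e_{k+1}\|\le q\|e_k\|+\varepsilon_{\mathrm{SVD}}$.

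Next, unroll the recursion by induction on $k$ with $e_0=0$, aiming for
\[
\|e_k\|\le \varepsilon_{\mathrm{SVD}}\sum_{j=0}^{k-1}q^j .
\]
The base case $k=0$ is trivial. For the inductive step, multiply the hypothesis by $q\ge 0$, which preserves the inequality, and add $\varepsilon_{\mathrm{SVD}}$. Summing the geometric series then gives the two cases: $k\varepsilon_{\mathrm{SVD}}$ when $q=1$, and $\frac{1-q^k}{1-q}\varepsilon_{\mathrm{SVD}}$ when $q<1$. The same formula also holds for $q>1$, although the statement does not need it.

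For the contractive case, set $q=1-\gamma\Delta t$, so $1-q=\gamma\Delta t$ and the previous formula reads $\frac{1-(1-\gamma\Delta t)^k}{\gamma\Delta t}\varepsilon_{\mathrm{SVD}}$. To use the $q<1$ branch I need $0\le q<1$. The bound $q<1$ follows from $\gamma\Delta t>0$. The bound $q\ge 0$ requires $\Delta t\le 1/\gamma$; I will state this as implicit, since a Lipschitz constant is nonnegative by definition. Alternatively I can take $q=|1-\gamma\Delta t|$, but the displayed formula is cleanest under the step restriction $\gamma\Delta t\le 1$, which I will add as a standing assumption. The final inequality then follows from $0\le (1-\gamma\Delta t)^k\le 1$, which makes the numerator at most $1$ uniformly in $k$ and gives time-uniform boundedness.

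There is no real analytic obstacle. The only delicate point is bookkeeping the hypotheses: the projection bound must apply to the actual intermediate state $F(\Psi_k)$, which is why the statement assumes it for all intermediate $v$. The sign condition $1-\gamma\Delta t\ge 0$ is also needed so that the geometric-sum formula and the final uniform bound are valid.
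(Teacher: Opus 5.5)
Your proposal is correct and follows the paper's proof essentially line by line. Both arguments add and subtract $F(\Psi_k)$, apply the triangle inequality with the projection and Lipschitz bounds, unroll the recursion into $\sum_{j=0}^{k-1} q^j\,\varepsilon_{\mathrm{SVD}}$, and then substitute $q=1-\gamma\Delta t$. The condition $\gamma\Delta t\le 1$, which the paper leaves implicit, needs no extra assumption: taking $u\neq v$ in the contractivity hypothesis already forces $1-\gamma\Delta t\ge 0$, and this is exactly what makes $(1-\gamma\Delta t)^k\ge 0$ and the final uniform bound valid.
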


\begin{proof}
From the definitions of the projected and untruncated schemes, we have
$\Psi_{k+1}
=
\Pi_{\chi_{\max}, \varepsilon_{\mathrm{SVD}}}\big(F(\Psi_k)\big),$ and
$\widehat{\Psi}_{k+1}
=
F(\widehat{\Psi}_k)$.
Hence,
$e_{k+1}
=
\Pi_{\chi_{\max}, \varepsilon_{\mathrm{SVD}}}\big(F(\Psi_k)\big)
-
F(\widehat{\Psi}_k)$.
Adding and subtracting $F(\Psi_k)$ yields
$e_{k+1}
=
\Big(
\Pi_{\chi_{\max}, \varepsilon_{\mathrm{SVD}}}\big(F(\Psi_k)\big)
-
F(\Psi_k)
\Big)
+
\Big(
F(\Psi_k) - F(\widehat{\Psi}_k)
\Big)$.
Taking norms and using the triangle inequality gives
$\|e_{k+1}\|
\le
\left\|
\Pi_{\chi_{\max}, \varepsilon_{\mathrm{SVD}}}\big(F(\Psi_k)\big)
-
F(\Psi_k)
\right\|
+
\|F(\Psi_k) - F(\widehat{\Psi}_k)\|.$
By the truncation assumption,
$\left\|
\Pi_{\chi_{\max},\varepsilon_{\mathrm{SVD}}}\!\big(F(\Psi_k)\big)
-
F(\Psi_k)
\right\|
\le
\varepsilon_{\mathrm{SVD}}$,
and by Lipschitz continuity,
$\|F(\Psi_k)-F(\widehat{\Psi}_k)\|
\le
q\,\|e_k\|$.
This yields
$\|e_{k+1}\|\le q\,\|e_k\|+\varepsilon_{\mathrm{SVD}}$.
Assuming $e_0=0$, iteration gives
$\|e_k\|
\le
\sum_{j=0}^{k-1} q^j \, \varepsilon_{\mathrm{SVD}}$.
If $q = 1$, this reduces to $\|e_k\| \le k\, \varepsilon_{\mathrm{SVD}}$. If $0 \le q < 1$, summing the geometric series gives
$\|e_k\|
\le
\dfrac{1 - q^k}{1 - q}\, \varepsilon_{\mathrm{SVD}}$.
If $q = 1 - \gamma \Delta t$ with $\gamma > 0$, then
$\|e_k\|
\le
\sum_{j=0}^{k-1} (1 - \gamma \Delta t)^j \, \varepsilon_{\mathrm{SVD}}
=
\dfrac{1 - (1 - \gamma \Delta t)^k}{\gamma \Delta t}\,
\varepsilon_{\mathrm{SVD}}
\le
\dfrac{\varepsilon_{\mathrm{SVD}}}{\gamma \Delta t}$.
\end{proof}

\begin{remark}[Implementation Details]
The tensor-network time-stepping procedure follows a fixed sequence of MPO--MPS contraction, canonicalization, and rank truncation. No learning or data-driven optimization is involved: the MPOs are determined entirely by the chosen spatial discretization, while truncation is used solely to control representation complexity.
\end{remark}

\begin{remark}[Stability and Complexity]
The tensor-network update inherits the stability properties of the underlying time-integration scheme, here explicit Euler with a CFL-restricted time step, together with the dissipative effects induced by diffusion and reaction. The projection $\Pi_{\chi_{\max},\varepsilon_{\mathrm{SVD}}}$ acts as a complexity-control mechanism and helps maintain bounded MPS ranks when the evolving solution remains approximately low-rank.
\end{remark}

\begin{remark}[Computational Scaling]
The computational cost per time step scales linearly with the number of MPS sites and polynomially with the MPS and MPO bond dimensions. Since the number of tensor sites grows with the tensorized representation rather than directly with the full spatial dimension, the method can remain efficient when the bond dimensions stay small. In contrast, classical grid-based solvers typically scale directly with the number of spatial degrees of freedom. The present study therefore emphasizes representation efficiency in regimes where the solution admits a compact low-rank tensor-network structure, rather than worst-case complexity guarantees.
\end{remark}

\section{Numerical Experiments} \label{sec:experiments}
In this section, we evaluate the proposed tensor-network framework on
ADR problems in one and two spatial
dimensions, comparing against high-accuracy classical solvers.
\subsection{One-Dimensional Advection--Diffusion--Reaction}

We consider the one-dimensional ADR equation
\begin{equation}
\label{eq:adr}
u_t + c\,u_x = \nu\,u_{xx} + \lambda\,u,
\qquad x\in[0,1],\; t\in[0,T],
\end{equation}
in the dissipative regime \(\lambda<0\). The spatial domain is discretized
with \(N_x=2^L\) grid points, where \(L=9\), giving \(N_x=512\) and
\(\Delta x\approx 1.95\times 10^{-3}\). Homogeneous Dirichlet boundary conditions $u(0,t)=u(1,t)=0$ are imposed and enforced via a mask MPS projection applied after each time step, which zeroes the boundary entries while preserving the tensor-network structure. The initial condition is taken as the localized Gaussian
$
u(x,0)=\exp\!\big(-100(x-0.3)^2\big).
$
Unless otherwise stated, we use $c=0.5$, $\nu=0.01$, $\lambda=-2.0$, and $T=1.0$, corresponding to rightward transport, diffusive spreading, and exponential decay.
The discretized state \(u(\cdot,t)\in\mathbb{R}^{N_x}\) is reshaped into a
tensor of order \(L\) and encoded as an MPS with physical dimension \(d=2\).
The derivative operators \(u_x\) and \(u_{xx}\) are represented as MPOs
constructed from second-order finite-difference stencils and scaled by
\(\Delta x^{-1}\) and \(\Delta x^{-2}\), respectively. Boundary conditions are
enforced at each step through a mask-MPS projection. Time integration is
performed entirely in MPS/MPO form using explicit Euler,
$
u^{k+1}=u^k+\Delta t\big(-c\,D_xu^k+\nu\,D_{xx}u^k+\lambda\,u^k\big),
$
followed by canonicalization and SVD truncation with tolerance \(10^{-12}\)
and maximum bond dimension \(60\). Dense reconstruction is used only for
visualization and comparison.

The time step is chosen conservatively as
$
\Delta t \le \min\!\left(\frac{\Delta x}{|c|},\frac{\Delta x^2}{\nu},
\frac{1}{|\lambda|}\right),
$
which gives \(\Delta t\approx 10^{-4}\). As a reference solution, we use a
classical method-of-lines discretization with adaptive \texttt{RK45} on the
same spatial grid. For comparison, the QTN trajectory is interpolated onto
the RK45 time points.

Figure~\ref{fig:adr_qtn} compares the QTN and RK45 solutions. The QTN solver reproduces the expected advection, diffusive broadening, and exponential decay with high fidelity. The difference $u_{\mathrm{RK45}} - u_{\mathrm{QTN}}$ remains smooth and localized around the transported pulse, while the maximum error stays small and well controlled over time. Global observables, including the peak amplitude and total mass, are also accurately captured, indicating that the tensor-network representation preserves both local solution structure and overall decay dynamics.

\begin{figure}[t]
    \centering
    \includegraphics[width=0.5\textwidth]{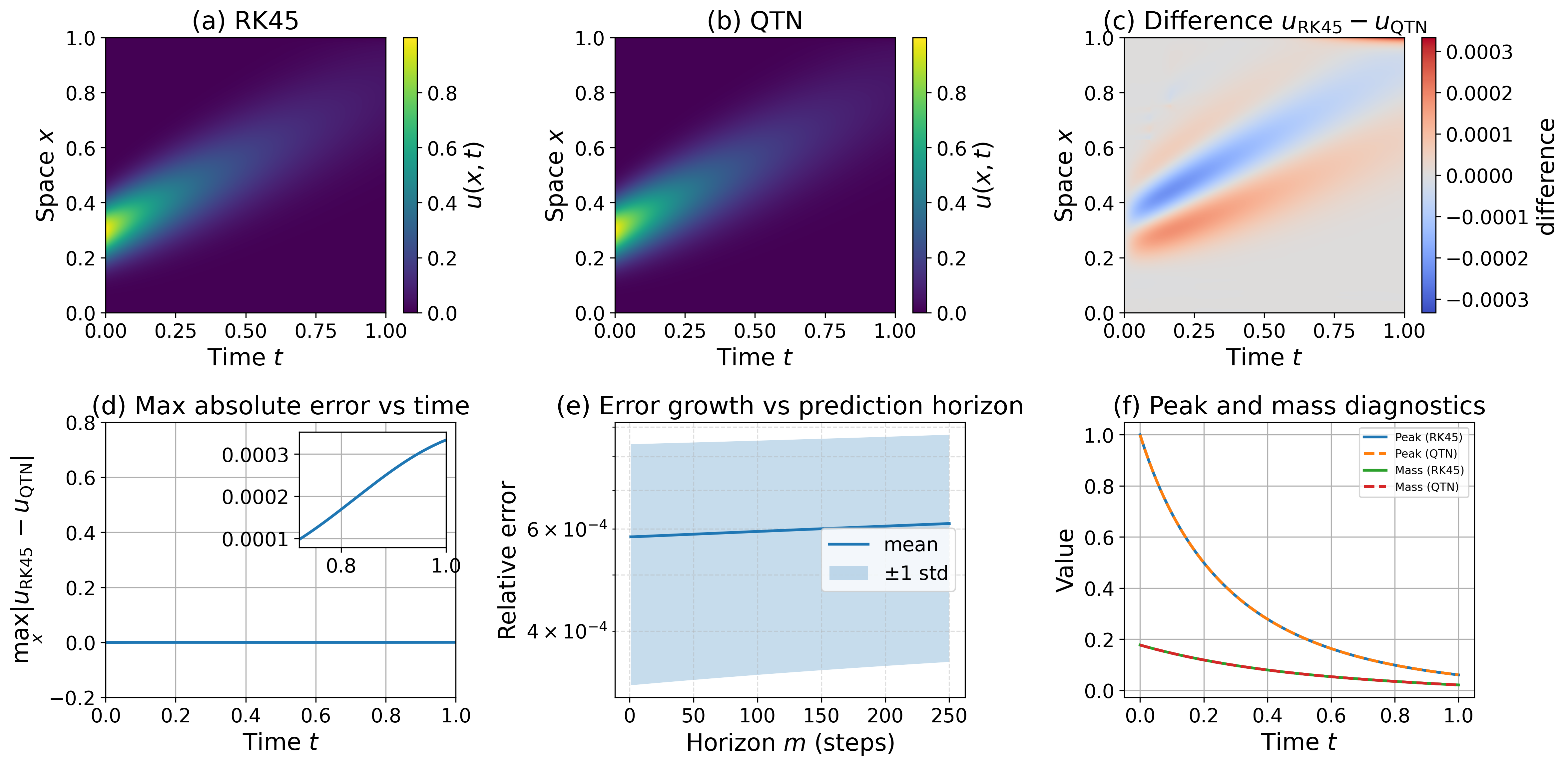}
    \caption{\small
Comparison of QTN and RK45 solutions for the 1D ADR equation. 
(a,b) RK45 reference and QTN solution, with the QTN trajectory interpolated onto the RK45 time grid. 
(c) Signed difference $u_{\text{RK45}} - u_{\text{QTN}}$, which remains smooth and localized. 
(d) Maximum absolute error over space versus time (inset: late-time behavior). 
(e) Restart-averaged error growth versus prediction horizon. 
(f) Peak amplitude and total mass, showing that the QTN method accurately preserves both local and global decay dynamics.
    }
    \label{fig:adr_qtn}
\end{figure}

To assess representation complexity, we track the mean bond dimension $\bar{\chi}(t)$ and the spatial rank profile $\chi_i(t)$ throughout the simulation. As shown in Figure~2, the mean bond dimension remains very small, typically $\bar{\chi}(t) \approx 3$–$4$, and the full rank profile stays uniformly bounded across the tensor cores. This confirms that the one-dimensional ADR dynamics admit a compact low-rank QTT/MPS representation and that truncation remains stable throughout the rollout.

In particular, the boundedness of the ranks reflects the fact that the underlying solution remains smooth and does not generate long-range correlations. While transport mechanisms such as advection can increase the bond dimension by translating localized features across the grid and inducing additional inter-core correlations in the tensorized representation, this growth remains limited because the transported structures remain smooth and localized.

\begin{figure}[t]
    \centering
    \includegraphics[width=0.45\textwidth]{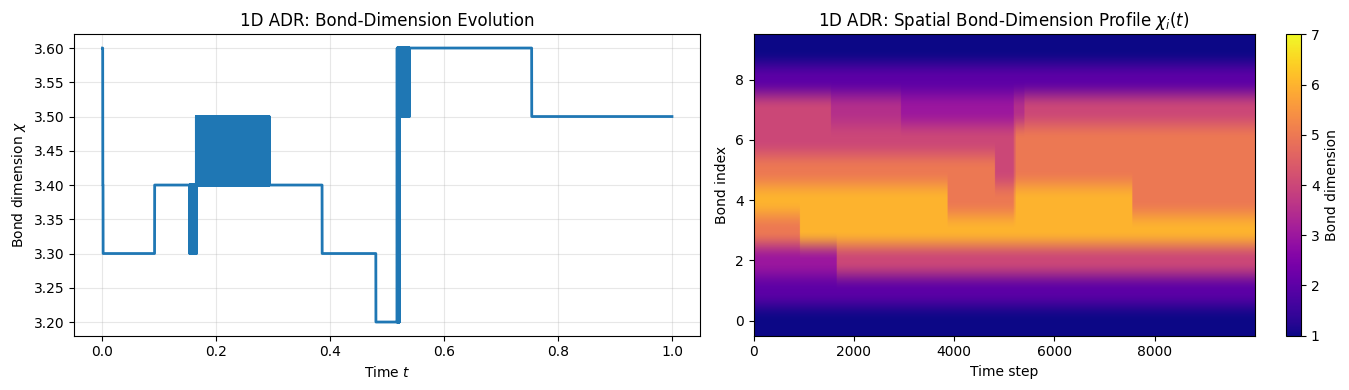}
    \caption{\small
Bond-dimension diagnostics for the 1D ADR simulation.
Left: mean bond dimension $\bar{\chi}(t)$ over time, showing that the tensor-network representation remains compact and stable.
Right: spatial bond-dimension profile $\chi_i(t)$ (heatmap), illustrating variation of MPS ranks across tensor cores and time.
The ranks remain uniformly small (typically $\chi \approx 3$--$4$), confirming the intrinsically low-rank structure of the dynamics.
    }
    \label{fig:adr1d_bond}
\end{figure}

We further examine six synthetic stress tests isolating transport, diffusion, reaction, and nonsmooth initial data. Figure~\ref{fig:adr1d_stress} shows the resulting trajectories of the mean bond dimension $\bar{\chi}(t)$. Pure advection produces the largest rank growth, with $\bar{\chi}(t)$ remaining below approximately 6. This behavior arises because advection translates the solution profile across the grid, which in the tensorized representation induces additional coupling between tensor cores and increases inter-core correlations.

Diffusion and reaction decay yield nearly constant or decreasing rank, reflecting their dissipative nature, which smooths the solution and reduces correlation complexity. Reaction growth also remains low-rank because it amplifies the solution pointwise without introducing additional spatial correlations. The advection–diffusion case exhibits only mild fluctuations due to the competing effects of transport and smoothing, while high-frequency initial data produce a transient increase in rank before being rapidly regularized.
Across all test cases, the ranks remain small and bounded, demonstrating that the tensor-network representation is robust across transport-dominated, diffusion-dominated, reaction-driven, and oscillatory regimes.

\begin{figure}[t]
    \centering
    \includegraphics[width=0.45\textwidth]{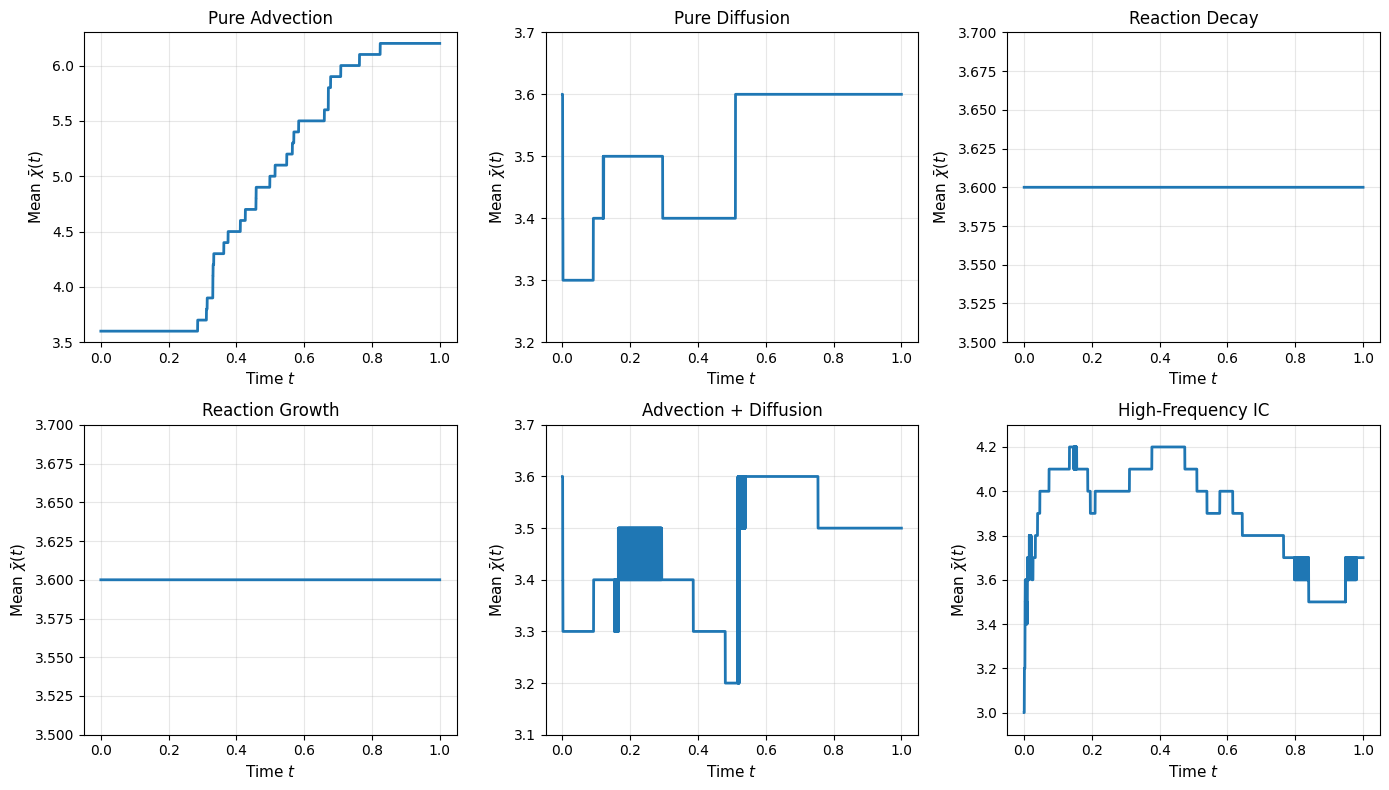}
    \caption{ \small
 Mean bond dimension $\bar{\chi}(t)$ for six 1D synthetic stress tests: 
(top) pure advection, pure diffusion, reaction decay; 
(bottom) reaction growth, advection--diffusion, and a high-frequency initial condition. 
Across all regimes, the representation remains low-rank and stable. 
Advection produces the largest rank growth due to translation, while diffusion and reaction decay yield nearly constant or decreasing ranks. 
Reaction growth remains low-rank, and oscillatory data are rapidly regularized.
    }
    \label{fig:adr1d_stress}
\end{figure}

\subsection{Two-Dimensional Advection--Diffusion--Reaction}
We next consider the two-dimensional ADR equation on the unit square
$(x,y)\in[0,1]^2$,
\begin{equation}
\partial_t u + c_x\,\partial_x u + c_y\,\partial_y u
= \nu(\partial_{xx}u+\partial_{yy}u) + \lambda u,
\end{equation}
with homogeneous Dirichlet boundary conditions $u=0$ on $\partial\Omega$.
Boundary conditions are enforced using the same mask-MPS projection as in the one-dimensional case.
We use $\nu=0.01$, $c_x=0.5$, $c_y=0.2$, $\lambda=-2.0$, and final time
$T=1$, with initial condition
$
u_0(x,y)=\exp\!\big(-80[(x-0.3)^2+(y-0.4)^2]\big).
$
The spatial domain is discretized on a tensor grid of size $N_x \times N_y = 2^{L_x} \times 2^{L_y}$ with $L_x = L_y = 6$. The full $N_x N_y$ degrees of freedom are represented as an MPS of length $L_x + L_y$ using the row-major ordering introduced in Section~\ref{sec:QTN}, while the derivative operators are encoded as MPOs.
Time integration is performed using explicit Euler with a CFL-type step-size restriction, and the reference solution is computed using a dense method-of-lines discretization with adaptive RK45.

Figure~\ref{fig:adr2d_snapshots} compares QTN and RK45 snapshots at representative times. The QTN solution accurately reproduces diagonal advection, diffusive spreading, and reaction-driven decay with high fidelity. The difference fields remain small and structured, with magnitude of order $10^{-3}$, indicating a mild phase shift rather than significant amplitude distortion.

To quantify the discrepancy, we track the maximum pointwise error
$
E_\infty(t)=\max_{x,y}\big|u_{\mathrm{RK45}}(x,y,t)-u_{\mathrm{QTN}}(x,y,t)\big|
$
and the cumulative maximum error
$
E_{\mathrm{cum}}(t_k)=\max_{j\le k}E_\infty(t_j).
$
As shown in Figure~\ref{fig:adr2d_diagnostics}, the pointwise error remains below $7.17 \times 10^{-4}$ at the final time and below $3.21 \times 10^{-3}$ throughout the rollout, while the cumulative error saturates early and remains bounded. Global observables, including the peak amplitude and total mass, are also captured to high accuracy.
These results demonstrate that the QTN framework extends naturally to two spatial dimensions while preserving both accuracy and stability.

\begin{figure}[t]
    \centering
    \includegraphics[width=\linewidth]{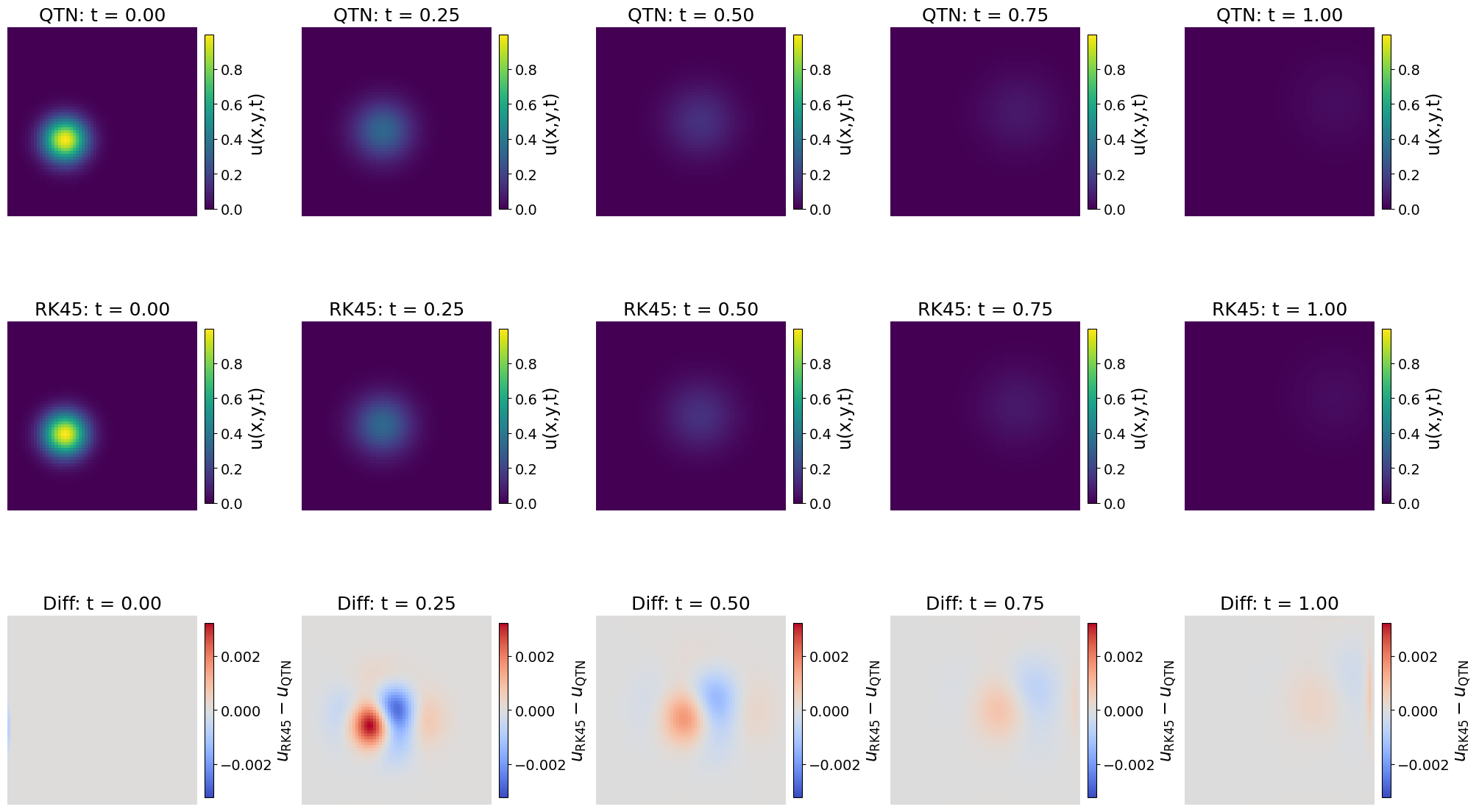}
    \caption{\small
    Snapshots of the 2D ADR solution at selected times. 
Top: QTN solution. Middle: RK45 reference. Bottom: difference $u_{\mathrm{RK45}} - u_{\mathrm{QTN}}$. 
The Gaussian pulse advects diagonally while diffusing and decaying, and the QTN solution closely matches the reference. 
The difference remains small and localized around the transported pulse.    
    }
    \label{fig:adr2d_snapshots}
\end{figure}

\begin{figure}[t]
    \centering
    \includegraphics[width=\linewidth]{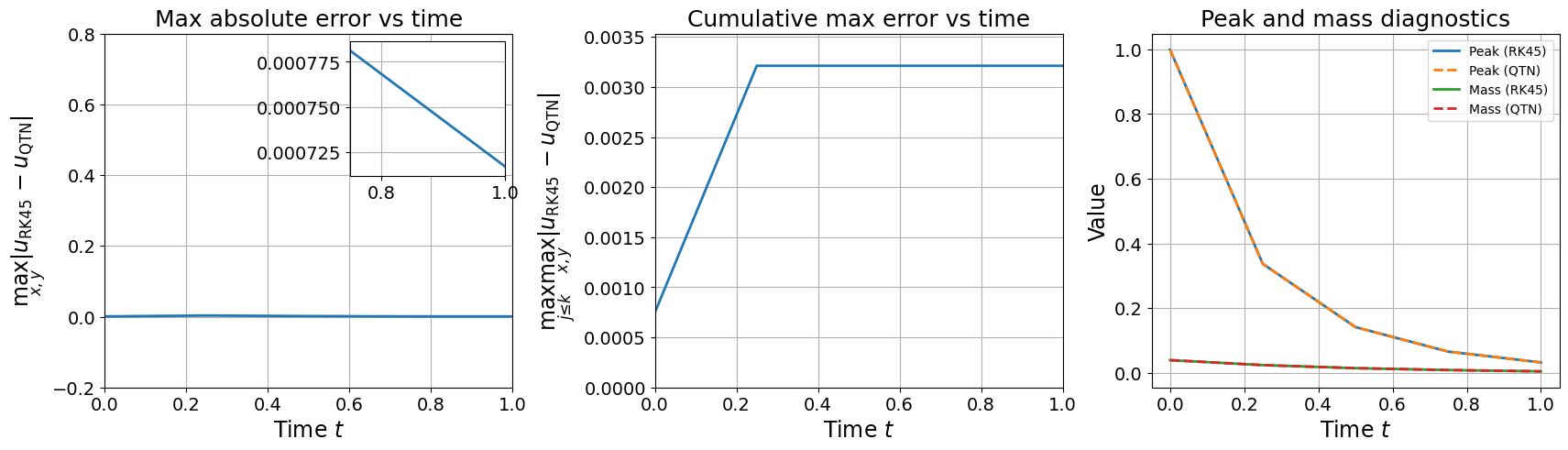}
    \caption{\small
Diagnostics for the 2D ADR problem. 
Left: maximum pointwise error $E_\infty(t)$ (inset: late-time behavior). 
Middle: cumulative maximum error $E_{\mathrm{cum}}(t)$. 
Right: peak amplitude and total mass for QTN and RK45. 
Errors remain small and bounded, and the QTN solution accurately reproduces the global decay of the reference.
    }
    \label{fig:adr2d_diagnostics}
\end{figure}

To assess representation complexity, we track the mean bond dimension $\bar{\chi}(t)$ and the spatial rank profile $\chi_i(t)$ throughout the simulation. As shown in Figure~\ref{fig:adr2d_bond}, the mean bond dimension remains very small, typically $\bar{\chi}(t) \approx 3$--$3.6$, and the ranks remain uniformly bounded across the tensor cores. This indicates that the two-dimensional ADR dynamics admit a compact low-rank QTN representation.
The bounded ranks reflect the smooth and localized structure of the solution, which does not generate significant long-range correlations in the tensorized representation.

\begin{figure}[t]
    \centering
    \includegraphics[width=0.45\textwidth]{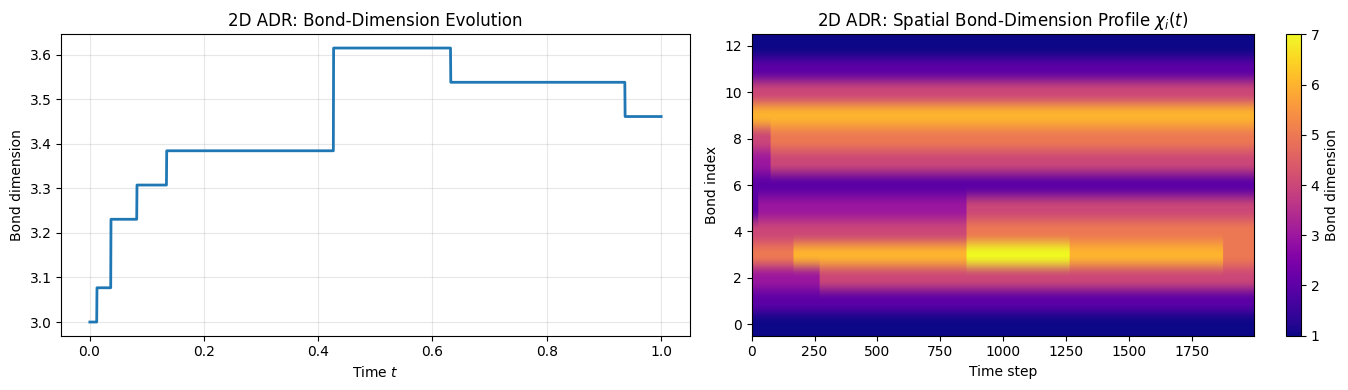}
    \caption{\small
Bond-dimension diagnostics for the 2D ADR simulation. 
Left: mean bond dimension $\bar{\chi}(t)$ over time, showing that the representation remains compact and stable. 
Right: spatial bond-dimension profile $\chi_i(t)$ (heatmap), illustrating variation of MPS ranks along the tensor chain and over time. 
Uniformly small ranks confirm the intrinsically low-rank structure of the dynamics.
    }
    \label{fig:adr2d_bond}
\end{figure}

We examine six synthetic stress tests isolating transport, diffusion, reaction, and high-frequency initial data (Fig.~\ref{fig:adr2d_stress}). In pure advection, diffusion, and reaction, the mean bond dimension remains nearly constant around $\bar{\chi}(t) \approx 2.8$--$2.9$, indicating low correlation complexity.
In contrast to 1D, advection in 2D produces only minor rank growth, as the transported Gaussian remains smooth and approximately separable. The high-frequency initial condition induces a transient rank increase before rapidly stabilizing under diffusion.
Across all tests, ranks remain small and bounded, demonstrating robustness across transport-, diffusion-, reaction-, and oscillatory regimes.

\begin{figure}[t]
    \centering
    \includegraphics[width=0.45\textwidth]{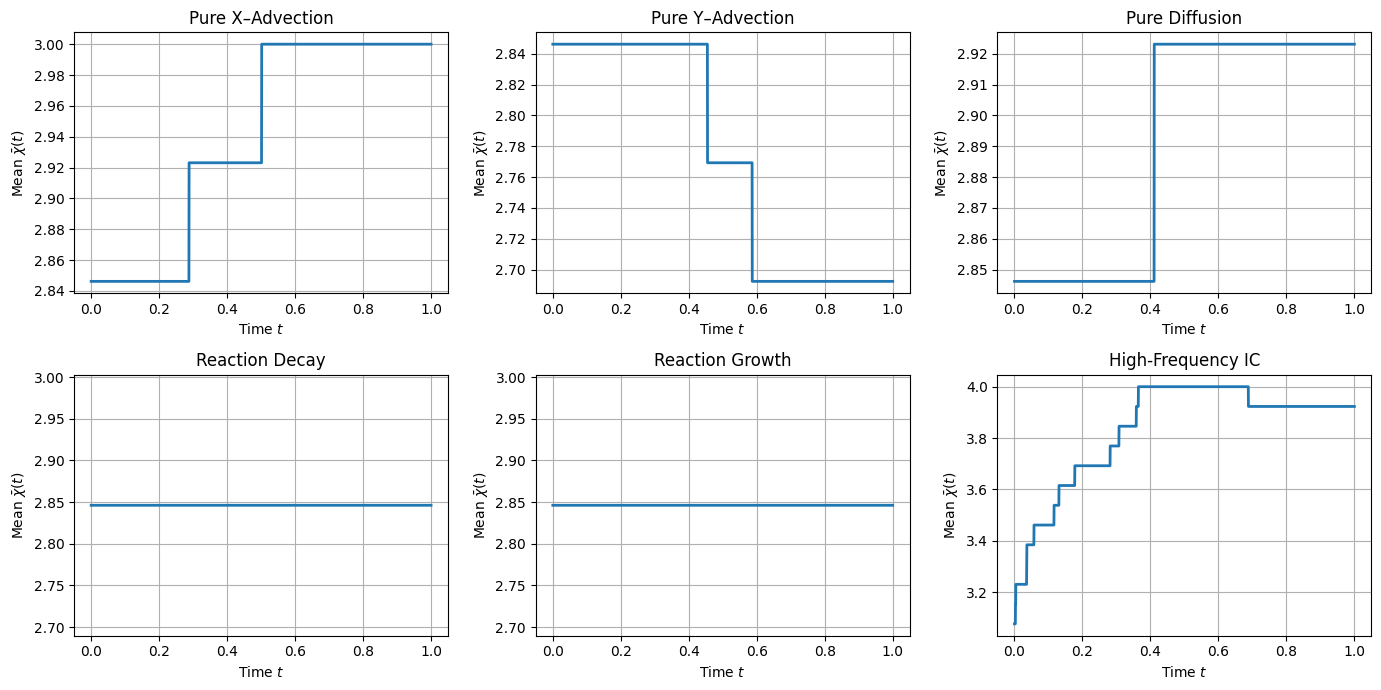}
    \caption{\small
 Mean bond dimension $\bar{\chi}(t)$ for six 2D synthetic stress tests: 
(top) pure $x$-advection, pure $y$-advection, pure diffusion; 
(bottom) reaction decay, reaction growth, and a high-frequency initial condition. 
In all cases, the rank remains nearly constant in the narrow range $\bar{\chi}(t)\approx 2.8$--$2.9$, reflecting the intrinsically low-rank structure. 
The high-frequency initial condition produces a transient rank increase that rapidly stabilizes under diffusion.
}
    \label{fig:adr2d_stress}
\end{figure}

\section{Conclusion}
\label{sec:conclusion}
We presented a quantum-inspired tensor-network framework for the numerical solution of linear ADR equations, with discretized solution fields encoded as MPS and finite-difference operators as MPOs. Time integration is performed entirely in tensor-network form using explicit Euler with controlled truncation.
Numerical experiments in one and two dimensions show that ADR dynamics admit compact low-rank tensor-network representations. Across transport-, diffusion-, reaction-, and oscillatory regimes, bond dimensions remain small and stable. The proposed solver agrees closely with RK45 reference solutions and accurately reproduces both local profiles and global observables.
These results demonstrate that tensor-network representations provide an effective structure-preserving approach for PDE simulation in regimes where the solution remains approximately low-rank, enabling accurate and stable time stepping in one and two dimensions.

\bibliographystyle{IEEEtran}%{IEEEconf}
\bibliography{IEEEabrv,Final}

\end{document}